%\includeonly{title,def,biblio,abstract}
%\usepackage{showkeys}

\documentclass[11pt]{article}%
\usepackage{amsmath,amssymb,amsthm,amsfonts}
\usepackage{wasysym}
\usepackage{epsfig}
\usepackage{psfrag}
\usepackage{subfigure}
\usepackage{epstopdf}
\usepackage{amsmath}
\usepackage{amsfonts}
\usepackage{amssymb}
\usepackage{graphicx}%

\setlength{\oddsidemargin}{0.0in}
\setlength{\textwidth}{6.5in}
\setlength{\baselineskip}{19pt}
\setlength{\parskip}{19pt}
\setlength{\topmargin}{0.0in}
\setlength{\textheight}{8.5in}
\newtheorem{thm}{Theorem}
\theoremstyle{definition}
\newtheorem{definition}{Definition}

\begin{document}

\title{Peixoto's Structural Stability Theorem:\\ The One-dimensional Version}
\author{Aminur Rahman\\Department of Mathematical Sciences\\New Jersey Institute of Technology\\Newark, NJ 07102-1982\\ar276@njit.edu }
\date{}
\maketitle

\begin{abstract}
Peixoto's structural stability and density theorems represent milestones in
the modern theory of dynamical systems and their applications. Despite the
importance of these theorems, they are often treated rather superficially, if
at all, in upper level undergraduate courses on
dynamical systems or differential equations. This is mainly because of the
depth and length of the proofs. In this note/module, we formulate and prove
the one-dimensional analogs of Peixoto's theorems in an intuitive and fairly
simple way using only concepts and results that for the most part should be
familiar to upper level undergraduate students in the mathematical sciences or
related fields. The intention is to provide students who may be interested in
further study in dynamical systems with an accessible one-dimensional
treatment of structural stability theory that should help make Peixoto's
theorems and their more recent generalizations easier to appreciate and understand.

\end{abstract}

\section{Introduction}

Inspired by the pioneering work of \cite{Andronov-Pontryagin37} and encouraged by
Solomon Lefschetz, the Brazilian engineer and mathematician, Maur\'{\i}cio
Matos Peixoto, formulated and proved the first global characterization of
structural stability and its density properties on smooth surfaces
\cite{Peixoto62} in terms that have become synonymous with the modern theory
of dynamical systems and blazed a path for myriad extensions and
generalizations. One of the most powerful aspects of Peixoto's theorems is the
way it uses local properties to characterize global features of dynamical
systems. Both the structural stability and density theorems can be combined as
follows (see Perko, etc.):

\pagebreak

\noindent\textbf{Theorem P} ({Peixoto's Structural Stability and Density
Theorems)}. \textit{Let }$\dot{x}=f(x)$\textit{\ be a }$C^{1}$%
\textit{\ dynamical system on a smooth closed surface }$M$\textit{. Then the
dynamical system is }$C^{1}$\textit{-structurally stable if and only if it
satisfies the following properties}:

\begin{itemize}
\item[(i)] \textit{All recurrent behavior is confined to finitely many fixed
points and periodic orbits, all of which are hyperbolic}.

\item[(ii)] \textit{There are no separatrices, i.e. orbits connecting saddle
points}.
\end{itemize}

\noindent\textit{Moreover, if }$M$\textit{\ is orientable, then the set of
structurally stable systems is }$C^{1}$\textit{\ open and dense in the
collection of all dynamical systems on the surface}.

\medskip

This theorem involves mathematical concepts unfamiliar to many advanced
undergraduates interested in studying dynamical systems, and the proof is
quite long and complicated. Given the importance of the results, both from a
theoretical and applied perspective, the much simpler one-dimensional analog
treated in what follows is likely to prove useful for understanding Peixoto's
theorems and their generalizations, which comprise an essential part of the
modern theory of dynamical systems and its applications.

Consider a dynamical system on the circle $\mathbb{S}^{1}$, to which all
smooth closed one-dimensional manifolds (curves) are equivalent. We can
represent this as the unit interval on the real line with the end points
identified:
\begin{equation}
\mathbb{S}^{1}=\mathbb{R}/\mathbb{Z}%
\end{equation}
where $\mathbb{R}/\mathbb{Z}$ denotes the real numbers mod$1$; i.e. for
$x,y\in\mathbb{R}$, $x\sim y\Leftrightarrow x\equiv y(\text{mod}%
1)\Leftrightarrow x-y\in\mathbb{Z}$.

Now let $f:\mathbb{R}\rightarrow\mathbb{R}$ be continuous, then we can define
our dynamical system as
\begin{equation}
\dot{x}=f(x)\text{,}\qquad f(x+1)=f(x)\;\forall\,x\in\mathbb{R}.
\label{Eq: original}%
\end{equation}

\begin{figure}[htb]
\centering
\includegraphics[scale=.65]{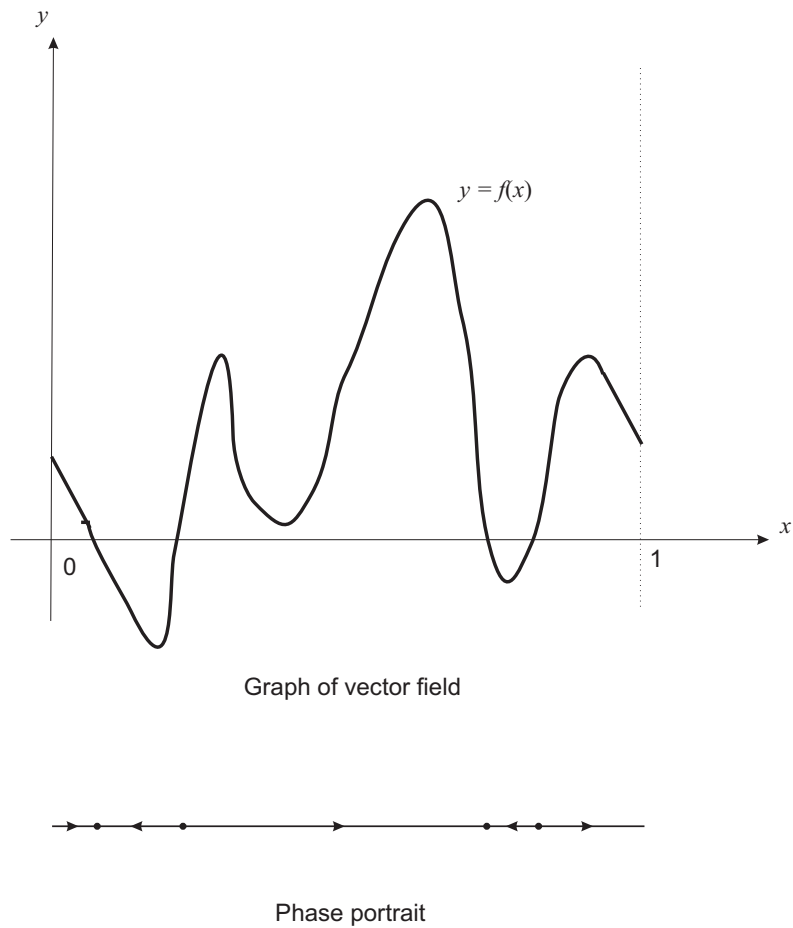}\caption{Example of a vector field of a
structurally stable dynamical system on the circle $\mathbb{S}$, represented
as the unit interval, and its phase portrait.}
\label{Fig: orig}
\end{figure}

That is, $f$ is a periodic function of period one. This can be
simplified by restricting to one period, namely the unit interval $0\leq
x\leq1$ such that $f(0)=f(1)$. Then (\ref{Eq: original}) becomes

\begin{equation}
\dot{x}=f(x)\text{,}\qquad f(1)=f(0). \label{Eq: f}%
\end{equation}

In this context the function $f$ is called a \emph{vector field}. The vector
field on $\mathbb{S}^{1}$ of class $C^{k}$ is a function that has $k$
continuous derivatives where each derivative is identified at the end points;
i.e. $f:[0,1]\rightarrow\mathbb{R}$ such that $f^{(m)}(0)=f^{(m)}(1)$
$\forall\,0\leq m\leq k$. An example of this would be the graph in Fig
\ref{Fig: orig}.

\section{Some Key Definitions}

Before we begin discussing our main results, let us introduce some definitions
that are to play key roles in our work. We shall assume in the sequel that all
of our dynamical systems are at least $C^{1}$.

\begin{definition}
A fixed point $x_{\ast}$ of (\ref{Eq: f}) is one such that $f(x_{\ast})=0$,
and is said to be \emph{hyperbolic} if $f^{\prime}(x_{\ast})\neq0$, otherwise
it is said to be \emph{nonhyperbolic}.
\end{definition}

\begin{definition}
A map $h:M\rightarrow M$, where $M$ is a manifold, is said to be a
\emph{homeomorphism} if it is a bijective bicontinuous map.
\end{definition}

For example in \cite{Meiss07} it is shown the maps
$h:(0,\infty)\rightarrow(0,1)$ defined by $h=1/(1+x^{2})$
and $h:\mathbb{S}^{1}\rightarrow \mathbb{S}^{1}$
defined by $h(x)=x+a\cos x$ for $|a|<1$ are
\emph{homeomorphisms}.\\

\begin{definition}
Two dynamical systems $\dot{x}=f(x)$ and $\dot{y}=g(y)$ on $M$ are
\emph{topologically equivalent} if there is a homeomorphism $h$ of $M$ such
that $h $ maps oriented (by incresing time) orbits of the first system onto
oriented orbits of the second system. Such an $h$ is called a
\emph{topological equivalence} between the systems (or sometimes a
\emph{conjugacy} - especially for the case of discrete dynamical systems).
\end{definition}

It is easy to verify that a rotation of $-\pi/2$ radians, which corresponds to
$y=h(x)=x-1/4$ for the $\operatorname{mod}1$ representation, is a topological
equivalence between the dynamical systems $\dot{\theta}=\sin\theta$ and
$\dot{\phi}=\cos\phi$ on the circle $\mathbb{S}^{1}$. A two-dimensional
example (in the plane $\mathbb{R}^{2}$) in \cite{Perko01} shows that the
linear system $\dot{x}=Ax$ is \emph{topologically equivalent} to $\dot{y}=By$
where
\[
A=\left[
\begin{array}
[c]{cc}%
-1 & -3\\
-3 & -1
\end{array}
\right]  \quad\text{and}\quad B=\left[
\begin{array}
[c]{cc}%
2 & 0\\
0 & -4
\end{array}
\right]
\]
via the homeomorphism
\[
h(x)=\frac{1}{\sqrt{2}}\left[
\begin{array}
[c]{cc}%
1 & -1\\
1 & 1
\end{array}
\right]  x.
\]
To see this we note that the origin is the only fixed point of both systems,
and for any other $(x_{1}^{0},x_{2}^{0})$, $h$ maps the solution of $\dot
{x}=Ax$ beginning at this initial point onto the solution $(y_{1}^{0}%
e^{2t},y_{2}^{0}e^{-4t})$, with $(y_{1}^{0},y_{2}^{0})=h\left(  (x_{1}%
^{0},x_{2}^{0})\right)  =1/\sqrt{2}\left(  x_{1}^{0}-x_{2}^{0},x_{1}^{0}%
+x_{2}^{0}\right)  .$\newline

\begin{definition}
Denote the class of $C^{1}$ maps of the circle by $C^{1}(\mathbb{S}^{1})$. Then

\[
\left\Vert f\right\Vert _{1}:=\sup\{\left\vert f(x)\right\vert :x\in
\mathbb{S}^{1}\}+\sup\{\left\vert f^{\prime}(x)\right\vert :x\in\mathbb{S}%
^{1}\}
\]

defines a norm on $C^{1}(\mathbb{S}^{1})$ called the \emph{$C^{1}$ norm}. This
norm generates a topology, called the \emph{$C^{1}$-topology}, in the usual
way via the open $\epsilon$-balls $B_{\epsilon}^{1}(f):=\{g\in C^{1}%
(\mathbb{S}^{1}):\left\Vert g-f\right\Vert _{1}<\epsilon\}$.
\end{definition}

\begin{definition}
The dynamical systems $\dot{x}=f(x)$ with $f\in C^{1}(\mathbb{S}^{1})$ is said
to be $C^{1}$ \emph{structurally stable} if for every $\epsilon>0$
sufficiently small, all $g\in B_{\epsilon}^{1}(f)$ are topologically
equivalent to $f$.
\end{definition}

We note that it is not difficult to imagine how these last two definitions can
be generalized to any finite-dimensional closed (compact and without boundary) manifolds.

It is useful to take note of the following rather simple characterization
of topological equivalence for a pair of $C^{1}$ dynamical systems, (i)
$\dot{x}=f(x)$ and (ii) $\dot{y}=g(y)$ on the unit circle $\mathbb{S}^{1}$:
\textit{A homeomorphism }$h:S^{1}\rightarrow S^{1}$\textit{\ is a topological
equivalence from }(i)\textit{\ to }(ii) \textit{if and only if}%
\begin{align}
h\left(  \{x\in\mathbb{S}^{1}:f(x)=0\}\right)   &  =\{y\in\mathbb{S}%
^{1}:g(y)=0\},\;h\left(  \{x\in\mathbb{S}^{1}:f(x)>0\}\right)  =\{y\in
\mathbb{S}^{1}:g(y)>0\}\text{ and}\nonumber\\
\text{ }h\left(  \{x\in\mathbb{S}^{1}:f(x)<0\}\right)   &  =\{y\in
\mathbb{S}^{1}:g(y)<0\}. \tag{TE}\label{TE}%
\end{align}

\subsection{Bump functions}

\label{Sec: bump} Throughout the necessity portion of the proof of the
one-dimensional analog of Theorem P, we will be using bump functions. A simple
example of a bump function is $\psi:\mathbb{R}\rightarrow\mathbb{R}$ defined
as (\ref{Eq: simple bump}) and plotted in Fig \ref{Fig: bump}.

\begin{equation}
\psi(x):=%
\begin{cases}
e^{\frac{-1}{1-x^{2}}} & \text{for}\;x\in(-1,1),\\
0 & \text{for}\;x\notin(-1,1);
\end{cases}
\label{Eq: simple bump}%
\end{equation}

\begin{figure}[htb]
\centering
\includegraphics[scale=.5]{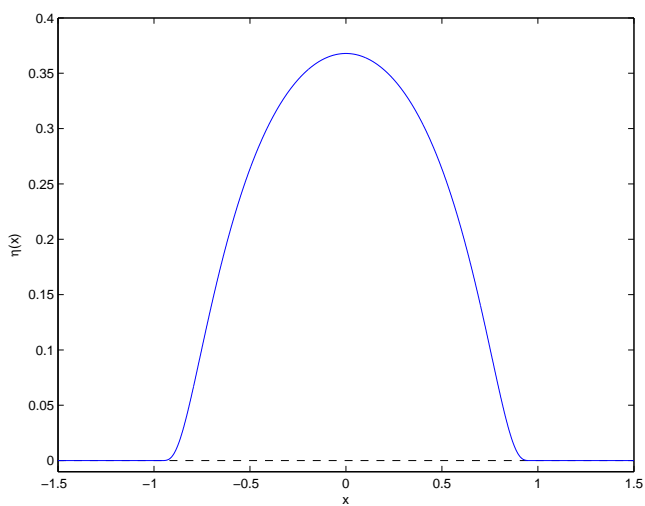}\caption{Example of a bump function.}%
\label{Fig: bump}%
\end{figure}

\indent Now, we can restrict the bump to any ball, which for our
proof will correspond to an interval of our choice. Let $x_{0}$ be the center
of the ball, and $r$ $(>0)$ be the radius. Then, denoting the ball as
$B_{r}\left(  x_{0}\right)  :=\{x\in\mathbb{R}:$ $\left\vert x-x_{0}%
\right\vert <r\}$, our bump function becomes

\begin{equation}
\psi(x):=%
\begin{cases}
\exp\left(  \frac{-r^{2}}{r^{2}-(x-x_{0})^{2}}\right)  & \text{for}\;x\in
B_{r}(x_{0}),\\
0 & \text{for}\;x\notin B_{r}(x_{0});
\end{cases}
\end{equation}

\indent This can be specified to any interval $[a,b]$, which corresponds to
a ball centered at $(a+b)/2$ with a radius of $(b-a)/2$: namely,

\begin{equation}
\psi(x):=%
\begin{cases}
\exp\left(  -\left(  \frac{b-a}{2}\right)  ^{2}\middle/\left[  \left(
\frac{b-a}{2}\right)  ^{2}-\left(  x-\frac{a+b}{2}\right)  ^{2}\right]
\right)  & \text{for}\;x\in(a,b),\\
0 & \text{for}\;x\notin(a,b);
\end{cases}
\end{equation}

\indent We also note that that it can be easily verified using basic calculus
that the bump function (6) satisfies

\begin{equation}
\left\Vert \psi\right\Vert _{1}<e^{-1}\left[  1+6e^{-1}(b-a)^{-1}\right]  ,
\label{Eq: bounded}%
\end{equation}

so once the interval has been specified, for every $\epsilon>0$ there exists
a $\sigma>0$ such that $\left\Vert \sigma\psi\right\Vert _{1}<\epsilon$. This
will be an important fact to keep in mind for the necessity portion of our
proof. In many instances we will use a scaled bump function to obtain small
$C^{1}$ perturbations of given functions in $C^{1}(\mathbb{S}^{1})$.

\section{Peixoto's Theorem on $\mathbb{S}^{1}$}

We note that every closed one-dimensional manifold is homeomorphic to a circle,
as shown in \cite{Milnor97}. So, it suffices to restrict our attention to $\mathbb{S}^{1}$
in the one-dimensional analog of Theorem P that follows.

\bigskip

\begin{thm}
\label{Thm: peixoto} Suppose (\ref{Eq: f}) is a $C^{1}$ dynamical system on
$\mathbb{S}^{1}$. Then (\ref{Eq: f}) is $C^{1}$ structurally stable if and
only if it has finitely many fixed points, all of which are hyperbolic.
\end{thm}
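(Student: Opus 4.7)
The plan is to derive both directions from the sign-pattern characterization (TE) of topological equivalence, together with the scaled bump perturbations constructed in Section \ref{Sec: bump}.

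\textbf{Sufficiency.} Suppose $f$ has finitely many fixed points $x_{1},\dots,x_{n}$, all hyperbolic. Since $|f'(x_{i})|>0$, each $x_{i}$ admits a neighborhood $N_{i}\subset\mathbb{S}^{1}$ on which $f$ is strictly monotone and $|f'|$ is bounded away from $0$; after choosing the $N_{i}$ disjoint, compactness of $\mathbb{S}^{1}$ gives a $\delta>0$ with $|f|\geq\delta$ on $\mathbb{S}^{1}\setminus\bigcup N_{i}$. For $g\in B_{\epsilon}^{1}(f)$ with $\epsilon$ smaller than $\delta$ and smaller than $\min_{i}|f'(x_{i})|/2$, the triangle inequality forces $g\neq 0$ outside $\bigcup N_{i}$, while inside each $N_{i}$ the $C^{1}$-closeness plus strict monotonicity of $f$ yield exactly one zero $y_{i}$ of $g$ with $\operatorname{sgn}g'(y_{i})=\operatorname{sgn}f'(x_{i})$. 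A topological equivalence $h$ is obtained by sending $x_{i}\mapsto y_{i}$ and extending by any orientation-preserving monotone bijection on each arc between consecutive fixed points; the matching of the three sign classes $\{f=0\}$, $\{f>0\}$, $\{f<0\}$ is immediate, so (TE) holds.

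\textbf{Necessity.} Suppose (\ref{Eq: f}) is $C^{1}$ structurally stable. I claim every fixed point of $f$ is hyperbolic; the finiteness of the fixed-point set then follows automatically, since hyperbolic zeros are isolated and any accumulation point of isolated zeros on the compact manifold $\mathbb{S}^{1}$ would be a zero by continuity of $f$ and a critical point by continuity of $f'$, contradicting hyperbolicity. Arguing contrapositively, I assume a non-hyperbolic fixed point $x_{\ast}$ and construct, for every $\epsilon>0$, some $g\in B_{\epsilon}^{1}(f)$ not topologically equivalent to $f$. The construction splits into three cases. If $x_{\ast}$ is an accumulation point of zeros of $f$, the one-dimensional Sard lemma furnishes arbitrarily small $\sigma$ for which $-\sigma$ is a regular value of $f$; then $g:=f+\sigma$ satisfies $\|g-f\|_{1}=|\sigma|$ while $g^{-1}(0)=f^{-1}(-\sigma)$ is discrete, closed, hence finite, whereas $f^{-1}(0)$ is infinite, so no homeomorphism can match them. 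If $x_{\ast}$ is isolated and $f$ does not change sign across it (say $f\geq 0$ on $(x_{\ast}-r,x_{\ast}+r)$), the perturbation $g:=f-\sigma\psi$ with $\psi$ the bump of support $(x_{\ast}-r,x_{\ast}+r)$ yields $g(x_{\ast})<0$ and $g=f\geq 0$ at the endpoints, producing two new zeros by the intermediate value theorem and hence a different fixed-point count. If $x_{\ast}$ is isolated and $f$ changes sign there, I use the asymmetric perturbation $g(x):=f(x)+\sigma(x-x_{\ast})\psi(x)$; then $g(x_{\ast})=0$ and $g'(x_{\ast})=\sigma\psi(x_{\ast})\neq 0$, so $x_{\ast}$ is hyperbolic for $g$ with stability opposite that of $f$, and since $f(x)=o(x-x_{\ast})$ while the linear factor is $\approx\sigma\psi(x_{\ast})(x-x_{\ast})$, the perturbation dominates $f$ in a punctured neighborhood of $x_{\ast}$ yet vanishes at the bump's boundary, forcing two additional zeros on either side. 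In every case the zero count or sign pattern is altered, violating (TE).

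The main obstacle is the final sub-case. To make the $o(x-x_{\ast})$ estimate play off against the linear factor cleanly I must coordinate three scales: the bump's support radius $r$, the amplitude $\sigma$, and the $C^{1}$-closeness $\epsilon$. First one picks $r$ small enough that $|f(x)|<\tfrac{1}{2}\,\sigma\psi(x_{\ast})\,|x-x_{\ast}|$ throughout $(x_{\ast}-r,x_{\ast}+r)$; then one uses the bound (\ref{Eq: bounded}) to absorb the product $\sigma\,\|(x-x_{\ast})\psi\|_{1}$ into $\epsilon$; finally one verifies that each newly created zero actually lies inside the region where the linear term dominates. Managing this bookkeeping with only $C^{1}$ regularity---rather than $C^{2}$, which would supply a clean quadratic bound on $f$ near $x_{\ast}$---is the principal technical step of the proof.
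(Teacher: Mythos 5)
Your sufficiency argument is essentially the paper's: disjoint monotonicity neighborhoods around the hyperbolic zeros, a compactness bound $|f|\geq\delta$ on the complement, exactly one sign-matched zero of $g$ per neighborhood, and a piecewise monotone homeomorphism matching zeros, all certified by (\ref{TE}). Your necessity argument, however, is organized genuinely differently, and mostly to good effect. The paper treats four separate violation modes (isolated nonhyperbolic point without sign change, with sign change, an interval of zeros, and an accumulating sequence of zeros), each with its own bump construction. You instead observe that ``all fixed points hyperbolic'' already forces finiteness on the compact $\mathbb{S}^{1}$, so the hypothesis fails exactly when some nonhyperbolic fixed point exists; and your Sard-based case (a small constant $\sigma$ with $-\sigma$ a regular value, so $g=f+\sigma$ has finitely many zeros while $f$ has infinitely many) disposes of the paper's Cases 3 and 4 in one stroke. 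This is cleaner and more rigorous than the paper's Case 4 (which is sketchy about why the perturbed system has only finitely many zeros), at the cost of invoking a measure-theoretic fact the paper deliberately avoids.

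There is, though, a genuine gap in your two isolated-point cases. Their hypotheses say only that $x_{\ast}$ is an isolated zero; they do not say $f^{-1}(0)$ is finite. If $f$ has infinitely many zeros accumulating somewhere else on the circle, then after your bump perturbation both $f^{-1}(0)$ and $g^{-1}(0)$ are infinite compact sets, and ``a different fixed-point count'' no longer yields non-homeomorphism: for instance, a convergent sequence together with its limit and $k$ isolated points is homeomorphic to the same sequence with $k+2$ isolated points, so gaining or losing finitely many zeros need not change the homeomorphism type of the zero set, and the appeal to (\ref{TE}) collapses. The repair is easy and uses only your own tools: split first on whether $f^{-1}(0)$ is finite or infinite. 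If infinite, your Sard argument applies verbatim (it nowhere uses that the chosen $x_{\ast}$ is itself the accumulation point); if finite, your counting in the two isolated cases is valid as written. You should restructure the cases this way rather than casing on the nature of a fixed $x_{\ast}$.

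Finally, the ``main obstacle'' you describe in the sign-change case is not actually an obstacle, and your proposed resolution is circular as stated: you pick $r$ using $\sigma$, but $\sigma$'s smallness is determined by the bump, which depends on $r$. The correct quantifier order removes the difficulty entirely: first fix $r$ so that $(x_{\ast}-r,x_{\ast}+r)$ contains no other zeros; then choose $\sigma$ so small that $\left\Vert \sigma(x-x_{\ast})\psi\right\Vert _{1}<\epsilon$, as in (\ref{Eq: bounded}); then, last, use $f(x_{\ast})=f^{\prime}(x_{\ast})=0$, i.e. $f(x)=o(x-x_{\ast})$, to find a radius $\rho<r$ on which the linear term dominates $f$. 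Domination is needed only on this small punctured neighborhood, not throughout the support of $\psi$: the two extra zeros come from comparing the sign of $g$ near $x_{\ast}$ (where the perturbation wins) with its sign at the edges of the support (where $\psi$ vanishes and $g=f$). This is exactly how the paper's Case 2 functions, with the cosmetic difference that the paper multiplies the linear factor by a plateau bump $\varphi$ rather than by $\psi$.
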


\begin{proof}
For sufficiency, let us first prove the result for a dynamical system with no
fixed points. Suppose (\ref{Eq: f}) has no fixed points, then $|f(x)|>0$ on
$[0,1]$. With out loss of generality, assume $f$ is positive, which means the
phase space consists of a single periodic counterclockwise orbit. Since $f$ is
continuous, there is an $\epsilon_{0}>0$ such that $f(x)>\epsilon_{0}%
\;\forall\,x\in\lbrack0,1]$. Consider the dynamical system
\begin{equation}
\dot{y}=g(y), \label{Eq: g}%
\end{equation}
where $g\in C^{1}(\mathbb{S}^{1})$. If
\begin{equation}
\left\Vert g-f\right\Vert _{1}<\epsilon<\epsilon_{0}, \label{Eq: C1}%
\end{equation}
then $g$ must also be positive on $\mathbb{S}^{1}$. Therefore, it follows from
(\ref{TE}) that the identity map is a topological equivalence, so
(\ref{Eq: f}) is structurally stable.

Next we prove the result for finitely
many hyperbolic fixed points. Suppose $f(x_{k})=0$ and $f^{\prime}(x_{k}%
)\neq0$ for $k=1,2,\ldots,m$. We may assume none of them is an endpoint of the
unit interval, since this can always be accomplished via a simple translation
of the period interval. Let us order them as follows:
\[
x_{k}<x_{k+1},k=1,2,\ldots,m-1\quad\text{and}\quad x_{m}<x_{1}.
\]
We use this ordering to make it more understandable when speaking about
\textquotedblleft sequential\textquotedblright\ points. Notice, the end points
are between the fixed points $x_{m}$ and $x_{1}$ and the non-fixed orbits are
just open intervals between sequential points.

We observe that between
any two sequential fixed points, $f$ does not change sign, and in some $\delta
$-neighborhood of every fixed point, $f^{\prime}$ is nonzero owing to the
hyperbolicity and continuity. So, we can select $\epsilon_{0}$ and $\delta>0$ small
enough such that $|f^{\prime}(x)|\geq2\epsilon_{0}$ on $[x_{k}-\delta
,x_{k}+\delta]$ for $k=1,2,\ldots,m$, where the intervals $[x_{k}-\delta
,x_{k}+\delta]$ are disjoint. Hence, $f$ is monotonic and has a single zero on
each of these intervals. Now define $K(\delta)$ as the closure of the
complement of these intervals, which is just a disjoint union of closed
intervals itself. Since $f$ does not change sign between any two sequential
fixed points, $f$ is nonvanishing in $K(\delta)$. Furthermore, it follows from
continuity of $f$ that there is $0<\epsilon_{1}<\epsilon$ such that
$|f(x)|\geq2\epsilon_{1}\;\forall\,x\in K(\delta)$.

Now we show that
structural stability is satisfied; in particular, for any $g\in C^{1}%
(\mathbb{S}^{1})$ satisfying
\begin{equation}
\left\Vert f-g\right\Vert _{1}<\epsilon<\min\{\epsilon_{0},\epsilon_{1}\}
\label{Eq: pert}%
\end{equation}
the dynamical system (\ref{Eq: g}) is topologically equivalent to
(\ref{Eq: f}). That is, there is a homeomorphism $h:\mathbb{S}^{1}%
\rightarrow\mathbb{S}^{1}$ mapping oriented orbits of (\ref{Eq: g}) to
(\ref{Eq: f}). By (\ref{Eq: pert}), $g$ has exactly one zero, denoted as
$y_{k}$, on each interval $[x_{k}-\delta,x_{k}+\delta]$ and no zeros on
$K(\delta)$, and furthermore $g^{\prime}(y_{k})\neq0$ and $g$ has the same
sign on $(y_{k},y_{k+1})$ as $f$ does on $(x_{k},x_{k+1})$ whenever $1\leq
k\leq m$. We select $h$ to be a piecewise linear homeomorphism with the
following properties: $h(0)=h(1)=0$ and also $h(x_{k})=y_{k}$ for
$k=1,2,\ldots,m$, and linear on the intervals $(0,x_{1}),(x_{1},x_{2}%
),\ldots,(x_{m-1},x_{m}),(x_{m},1)$; namely%
\[
h(x):=\left\{
\begin{array}
[c]{cc}%
\left(  \frac{y_{1}}{x_{1}}\right)  x, & 0\leq x\leq x_{1}\\
\frac{\left(  y_{k+1}-y_{k}\right)  x+\left(  y_{k}x_{k+1}-x_{k}%
y_{k+1}\right)  }{\left(  x_{k+1}-x_{k}\right)  }, & x_{k}\leq x\leq
x_{k+1},1\leq k\leq m-1\\
\left(  \frac{y_{m}}{x_{m}-1}\right)  (x-1), & x_{m}\leq x\leq1
\end{array}
\right.
\]
This proves the sufficiency of the hypothesis owing to (\ref{TE}).\\ \\

For necessity, we show that if the fixed point hypothesis is not satisfied, then
(\ref{Eq: f}) is not structurally stable. In order to show this, we need to
analyze all the cases in which the hypothesis may be violated. In each case we
show if we add an arbitrarily small $C^{1}$ perturbation, $\eta$, of the right
form to the original system, $g(x)=f(x)+\eta(x)$, we obtain a system that is
not topologically to (\ref{Eq: f}). The analysis is to be done in
neighborhoods of nonhyperbolic fixed points, with the intention of showing
that an arbitrarily small perturbation can change the homeomorphism type of
the original fixed point set, thus insuring in view of (\ref{TE}) that the
perturbed system cannot be topologically equivalent to the original.\\

\noindent Case 1: An isolated nonhyperbolic fixed point $x_{\ast}$ across which $f$ does
not change sign. It can be assumed, without loss of generality, that $f(x)>0$
in some punctured neighborhood $\mathring{B}_{\delta}(x_{\ast}):=B_{\delta
}(x_{\ast})\smallsetminus\{x_{\ast}\}$ of $x_{\ast}$ as shown in Fig
\ref{Fig: case1}.
\begin{figure}[htb]
\centering
\includegraphics[scale=.5]{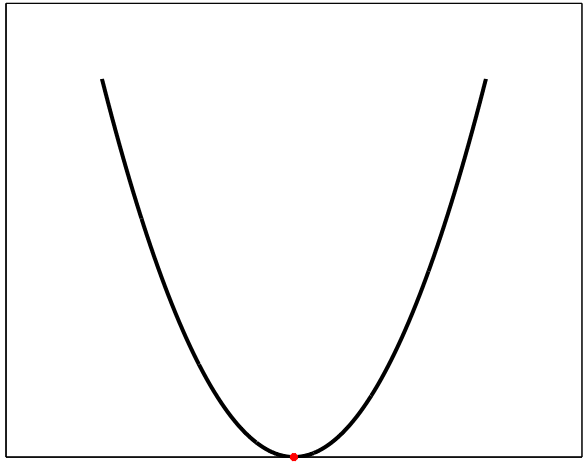}\caption{An example of a function in a
neighborhood of a nonhyperbolic fixed point with the aforementioned
properties.}
\label{Fig: case1}
\end{figure}
Note that if $x_{\ast}$ is the only fixed point, the perturbation
$g(x)=f(x)+\epsilon$ defines a dynamical system with an empty fixed point set,
denoted as $g^{-1}(0)=\varnothing$, while $f^{-1}(0)=\{x_{\ast}\}$, so it
follows from (TE) that $\dot{x}=f$ and $\dot{y}=g$ are topologically inequivalent.

If $x_{\ast}$ is not unique, we have to localize the above analysis.
In order to do this we would like to perturb the
system in such a way as to annihilate the fixed point without affecting
anything outside of the neighborhood. This can be accomplished by using a bump
function (cf. section \ref{Sec: bump}).

Since $x_{\ast}$ is the
only fixed point, which we may assume is not an end point of the unit
interval, in $(x_{\ast}-\delta,x_{\ast}+\delta)$, we define

\begin{equation}
\psi(x):=
\begin{cases}
\exp\left(  \frac{-\delta^{2}}{\delta^{2}-(x-x_{\ast})^{2}}\right)  &
\text{for}\;x\in(x_{\ast}-\delta,x_{\ast}+\delta),\\
0 & \text{for}\;x\notin(x_{\ast}-\delta,x_{\ast}+\delta);
\end{cases}
\label{Eq: case1}
\end{equation}

It follows from (\ref{Eq: bounded}) that for any $\epsilon>0$, there exists a
$\sigma>0$ such that $\left\Vert \sigma\psi\right\Vert _{1}<\epsilon$. Hence,
if $g=f+\sigma\psi$, $\left\Vert f-g\right\Vert _{1}<\epsilon$ and $\dot{y}=g$
has no fixed point in $(x_{\ast}-\delta,x_{\ast}+\delta)$. Consequently,
$f^{-1}(0)$ and $g^{-1}(0)$ cannot be homeomorphic, which means that $f$ is
not structurally stable.\\

\noindent Case 2: An isolated nonhyperbolic fixed point $x_{\ast}$ across
which $f$ changes sign, which can be assumed, without loss of generality,
to be as shown in Fig. \ref{Fig: case2}. Again, we define $\delta>0$ such
that $f$ is not zero in $\mathring{B}_{\delta}(x_{\ast})$ .
\begin{figure}[htb]
\centering
\includegraphics[scale=.5]{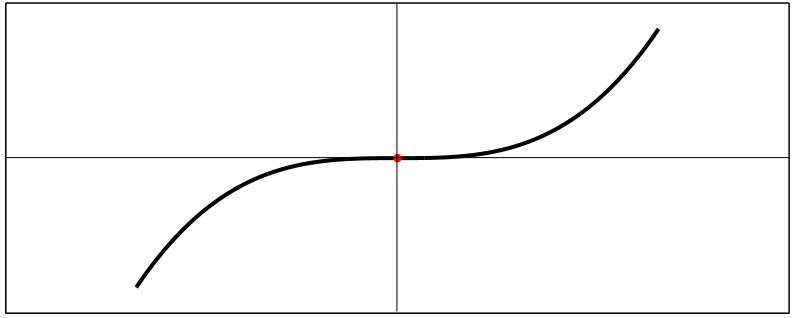}
\caption{An example of a function in a
neighborhood of a nonhyperbolic fixed point with the aforementioned
properties.}%
\label{Fig: case2}%
\end{figure}
For this case, we shall show there is an arbitrarily small
$C^{1}$ perturbation confined to $B_{\delta}(x_{\ast})$, which has three hyperbolic
fixed points in this interval instead of one nonhyperbolic one.

\begin{figure}[htb]
\centering
\includegraphics[scale=.4]{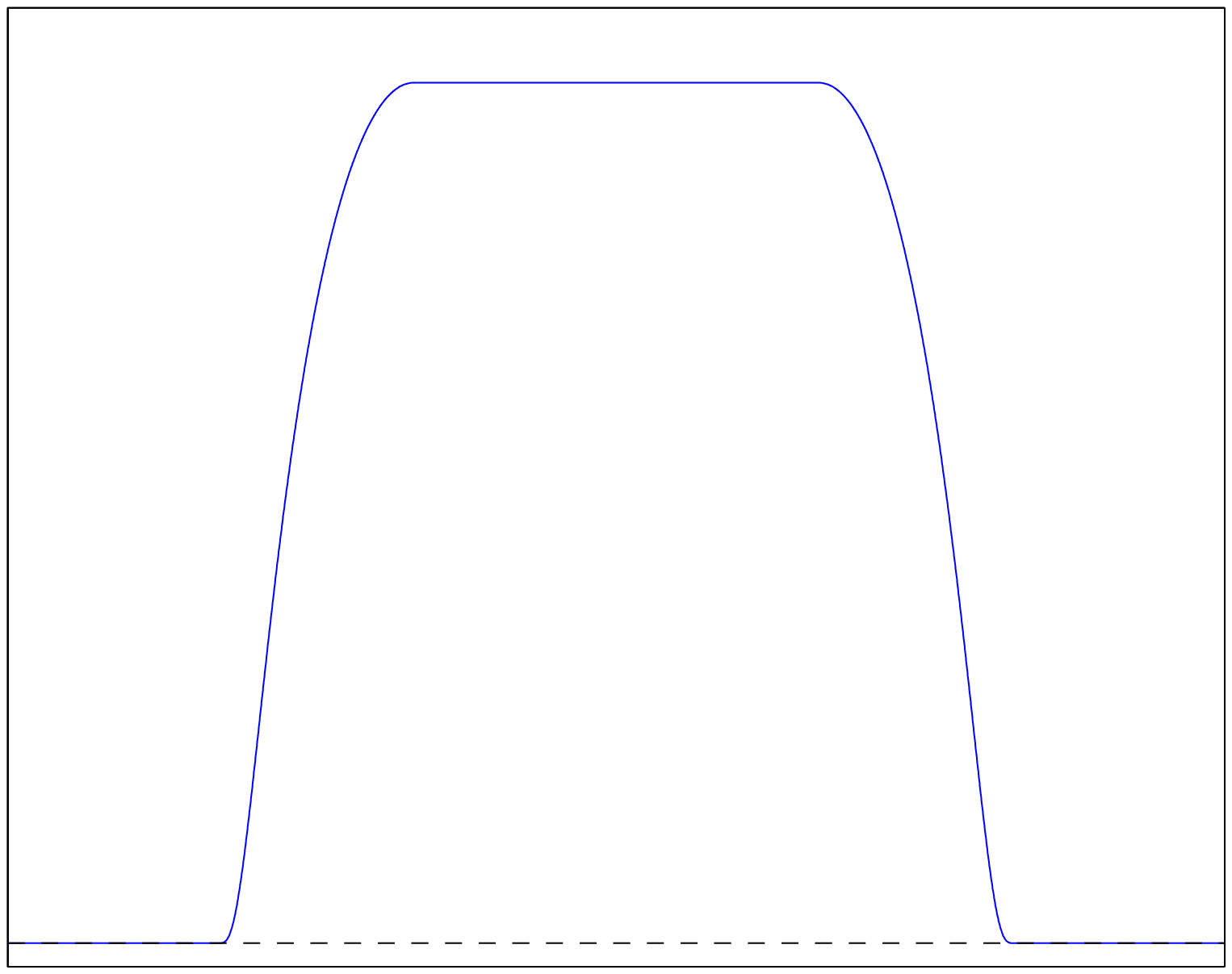}
\includegraphics[scale=.4]{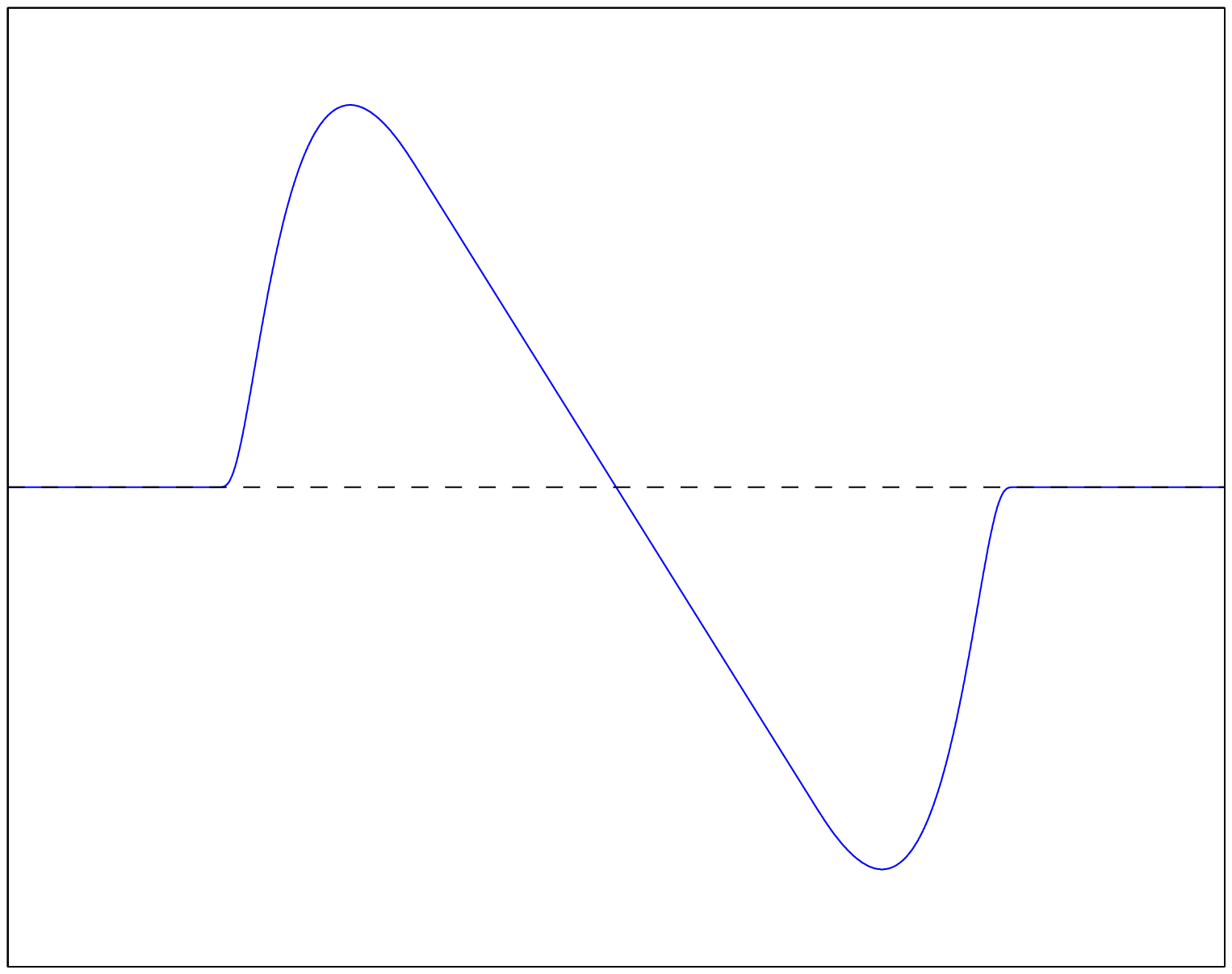}
\caption{Example plots of equations \ref{Eq: case2} and \ref{Eq: case2v} respectively.}
\label{Fig: case2a}
\end{figure}

First, we put two bump functions together to create a new bump
function that is equal to $e^{-1}$ in the closed interval $[x_{\ast}%
-\delta/2,x_{\ast}+\delta/2]$ and vanishes in the complement of $(x_{\ast
}-\delta,x_{\ast}+\delta)$; namely,
\begin{equation}
\varphi(x):=\left\lbrace
\begin{array}
[c]{cc}%
\exp\left(  -\left(  \frac{\delta}{2}\right)  ^{2}\middle/\left[  \left(
\frac{\delta}{2}\right)  ^{2}-\left(  x-x_{\ast}+\frac{\delta}{2}\right)
^{2}\right]  \right)  , & x_{\ast}-\delta<x\leq x_{\ast}-\delta/2\\
e^{-1}, & x_{\ast}-\delta/2\leq x\leq x_{\ast}+\delta/2\\
\exp\left(  -\left(  \frac{\delta}{2}\right)  ^{2}\middle/\left[  \left(
\frac{\delta}{2}\right)  ^{2}-\left(  x_{\ast}-x+\frac{\delta}{2}\right)
^{2}\right]  \right)  , & x_{\ast}+\delta/2\leq x<x_{\ast}+\delta\\
0, & x\notin(x_{\ast}-\delta,x_{\ast}+\delta)
\end{array}\right\rbrace .  
\label{Eq: case2}
\end{equation}
Observe that this function is $C^{\infty}$ on the whole real line except at
the points $x_{\ast}-\delta/2$ and $x_{\ast}+\delta/2$ where it is only
$C^{1}$. Next, we define
\begin{equation}
\vartheta(x) := -\frac{2(  x-x_{\ast})\varphi(x)}{\delta e}
\label{Eq: case2v}
\end{equation}
Examples of (\ref{Eq: case2}) and (\ref{Eq: case2v}) are given in Fig \ref{Fig: case2a}.

Note, as usual, for each $\epsilon>0$ there exists a $\sigma>0$ such that
$\left\Vert \sigma\vartheta\right\Vert _{1}<\epsilon$. 
Moreover,
$g:=f+\sigma\vartheta$ has a zero at $x_{\ast}$ with $g^{\prime}(x_{\ast})<0$
and just two other zeros in $B_{\delta}(x_{\ast})$ at points $x_{\ast}\pm\nu$,
with $0<\nu<\delta$, and $g^{\prime}(x_{\ast}-\nu)=g^{\prime}(x_{\ast}+\nu
)>0$. As $\left\Vert f-g\right\Vert _{1}<\epsilon$ and $f^{-1}(0)$ and
$g^{-1}(0)$ are not homeomorphic, (TE) implies that $f$ is not structurally stable.\\

\noindent Case 3: An interval of (nonhyperbolic) fixed points, i.e. $f(x)=0$
on some interval $[a,b]\subseteq\lbrack0,1]$. If $[a,b]=[0,1]$, the addition
of an arbitrarily small positive constant changes the fixed point set from all
of $\mathbb{S}^{1}$ to the empty set, which proves that such a system cannot
be $C^{1}$structurally stable. On the other hand, if the interval is a proper
subset of the unit interval, we may assume that $[a,b]\subseteq\left(
0,1\right)  $ and it is isolated from any other points in the fixed point set
of $\dot{x}=f$. Accordingly, there is a positive $\delta$ such that
$[a-\delta,b+\delta]\subset\left(  0,1\right)  $ and $[a-\delta,b+\delta]\cap
f^{-1}\left(  0\right)  =[a,b]$. By analogy with Case 1 and Case 2 above, we
consider two subcases: (i) $f$ has the same sign in $(a-\delta,a)$ and
$(b,b+\delta)$; and (ii) $f$ has opposite signs in $(a-\delta,a)$ and
$(b,b+\delta)$. Naturally, we may assume without loss of generality the the
sign in (i) is positive, and in (ii) it goes from negative to positive. It is
convenient to use the following analog of the bump function $\varphi$ for both
(i) and (ii):

\begin{equation}
\hat{\varphi}(x):=\left\{
\begin{array}
[c]{cc}%
\exp\left(  -\left(  \frac{\delta}{2}\right)  ^{2}\middle/\left[  \left(
\frac{\delta}{2}\right)  ^{2}-\left(  x-a+\frac{\delta}{2}\right)
^{2}\right]  \right)  , & a-\delta<x\leq a-\delta/2\\
e^{-1}, & a-\delta/2\leq x\leq b+\delta/2\\
\exp\left(  -\left(  \frac{\delta}{2}\right)  ^{2}\middle/\left[  \left(
\frac{\delta}{2}\right)  ^{2}-\left(  b-x+\frac{\delta}{2}\right)
^{2}\right]  \right)  , & b+\delta/2\leq x<b+\delta\\
0, & x\notin(a-\delta,b+\delta)
\end{array}
\right.  . \label{Eq: case3}%
\end{equation}

Inasmuch as for any $\epsilon>0$ there is a $\sigma>0$ such that $\left\Vert
\sigma\phi\right\Vert _{1}<\epsilon$, in subcase (i) the perturbation $\dot
{y}=g=f+\sigma\phi$ satisfies has $\left\Vert f-g\right\Vert _{1}<\epsilon$
and has no fixed points in $[a-\delta,b+\delta]$, which means it cannot be
topologically equivalent to $\dot{x}=f$ in virtue of (TE). While for subcase
(ii), the perturbation $\dot{y}=g$, where
\[
g:=f+\sigma\hat{\vartheta},
\]
with
\begin{equation}
\hat{\vartheta}(x):=-\frac{2\hat{\varphi}(x)}{e\left(  a+b\right)  }\left(
x-\frac{a+b}{2}\right)  ,
\label{Eq: case3v}
\end{equation}
produces an arbitrarily small $C^{1}$ perturbation of $\dot{x}=f$ having
precisely three hyperbolic fixed points in $[a-\delta,b+\delta]$. Therefore,
$f$ is not structurally stable for any of these subcases.\\ \\

Although there remain some situations for which the hypothesis of the theorem
does not hold, we have actually introduced the bump function methodology which
can be readily seen to be capable of disposing of the remaining cases.
Consequently, we leave the remaining details of the proof to the reader, with
the following very interesting exception.\\ \\

\noindent Case 4: Suppose $\dot{x}=f$ has distinct fixed points $x_{1}%
,x_{2},x_{3},\ldots$ and $x_{n}\rightarrow x_{\ast}$ , so that $x_{\ast} $ is
a fixed point and the limit of the sequence $\{x_{n}\}$, which may be assumed
to lie in an open interval $J=(x_{\ast}-r,x_{\ast}+r)$ contained in $(0,1)$,
which contains no other fixed points. The sequence might consist of all
hyperbolic fixed points as shown in Fig \ref{Fig: countable}, or it might be
comprised of some combination of hyperbolic fixed points and and nonhyperbolic
fixed points of the types treated in Cases 1 and 2.

\begin{figure}[htb]
\centering
\includegraphics[scale=.5]{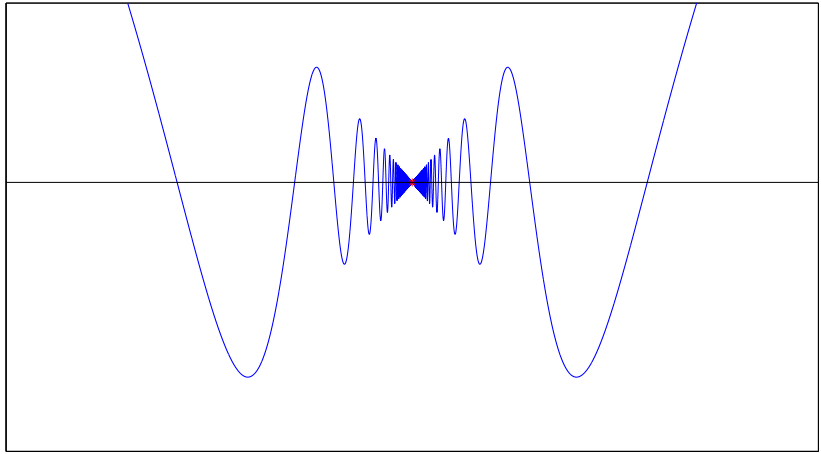}\caption{An example of a function
with fixed points having the aforementioned properties.}
\label{Fig: countable}
\end{figure}

Notice that $\{x_{n}\}$ is countably infinite, so that if we perturb the
system $\dot{x}=f$ to a system $\dot{y}=g$ with only finitely many fixed
points in $J$, and the same fixed points in the complement of $J$, the two
systems must be topologically inequivalent.

Since $f$ is $C^{1}$, for any $\epsilon>0$ there is a positive
$\delta=\delta(\epsilon)<\epsilon$ such that $|x-x_{\ast}|<\delta$ implies
$\left\vert f(x)\right\vert =|f(x)-f(x_{\ast})|<\epsilon\left\vert x-x_{\ast
}\right\vert $. Furthermore, there are only finitely many fixed points in
$J\smallsetminus B_{s}(x_{\ast})$ for any $0<s<r$. Let us use the bump
function
\begin{equation}
\psi(x):=%
\begin{cases}
\exp\left(  \frac{-\delta^{2}}{\delta^{2}-(x-x_{\ast})^{2}}\right)  &
\text{for}\;x\in(x_{\ast}-r/2,x_{\ast}+r/2),\\
0 & \text{for}\;x\notin(x_{\ast}-r/2,x_{\ast}+r/2);
\end{cases}
,
\end{equation}
and for any given $\epsilon>0$ choose $\sigma>0$ such that $\left\Vert
g-f\right\Vert _{1}=\left\Vert \left(  f+\sigma\upsilon\right)  -f\right\Vert
_{1}<\epsilon$. Then $g$ has no zeros in $B_{s}(x_{\ast})$ for some $0<s<r/2$
and so only finitely many fixed points in $J$, which means that $f$ is not
structurally stable. This completes our proof.

\end{proof}

\section{Density Theorem on $\mathbb{S}^{1}$}

We now prove the one-dimensional analog of the density part of Theorem P. It
is convenient to introduce the following notation towards this end. Define
$SS^{1}\left(  \mathbb{S}^{1}\right)  $ to be the $C^{1}$-structurally stable
systems $\dot{x}=f$ on $C^{1}\left(  \mathbb{S}^{1}\right)  .$

\begin{thm}
The set of dynamical systems $SS^{1}\left(  \mathbb{S}^{1}\right)  $ is
$C^{1}$ open and dense in $C^{1}\left(  \mathbb{S}^{1}\right)  .$
\end{thm}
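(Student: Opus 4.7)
The plan is to lean on Theorem~\ref{Thm: peixoto} throughout: it identifies $SS^1(\mathbb{S}^1)$ with the collection $\mathcal{F}$ of $f\in C^1(\mathbb{S}^1)$ having finitely many fixed points, all hyperbolic. So the task reduces to showing that $\mathcal{F}$ is both $C^1$-open and $C^1$-dense in $C^1(\mathbb{S}^1)$, which I would treat as two separate lemmas.

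For openness I would simply recycle the sufficiency argument used in the proof of Theorem~\ref{Thm: peixoto}. Given $f\in\mathcal{F}$ with fixed points $x_1,\dots,x_m$, pick $\delta>0$ so that the pairwise disjoint closed intervals $I_k:=[x_k-\delta,x_k+\delta]$ satisfy $|f'|\geq 2\epsilon_0$ on each $I_k$, and so that $|f|\geq 2\epsilon_1$ on the compact complement $K(\delta)$. Then for any $g$ with $\|g-f\|_1<\eta:=\min\{\epsilon_0,\epsilon_1\}$, we have $g\ne 0$ on $K(\delta)$ and $|g'|\geq \epsilon_0$ on each $I_k$, so $g$ is strictly monotone on each $I_k$ with a unique, necessarily hyperbolic, zero. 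Hence $g$ possesses precisely $m$ hyperbolic fixed points, i.e.\ $g\in\mathcal{F}$, giving $B_\eta^1(f)\subset\mathcal{F}$.

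For density I would perturb by a cleverly chosen \emph{constant}. Given $f\in C^1(\mathbb{S}^1)$ and $\epsilon>0$, the shifted vector field $g_t:=f+t$ lies in $B_\epsilon^1(f)$ as soon as $|t|<\epsilon$, since $\|g_t-f\|_1=|t|$. The key observation is that a zero $x$ of $g_t$ is hyperbolic exactly when $-t$ is a regular value of $f$ attained at $x$. Sard's Theorem, valid in this one-dimensional $C^1$ setting, guarantees that the set of critical values $f(\{x\in\mathbb{S}^1:f'(x)=0\})$ has Lebesgue measure zero in $\mathbb{R}$, so one can select $t\in(-\epsilon,\epsilon)$ with $-t$ a regular value of $f$. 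Every fixed point of the resulting $g_t$ is then hyperbolic, hence isolated by the inverse function theorem, and compactness of $\mathbb{S}^1$ bounds their number. Thus $g_t\in\mathcal{F}\cap B_\epsilon^1(f)$, proving density.

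The main obstacle is the density half: openness is essentially a rerun of material already developed, whereas density genuinely needs Sard's Theorem or an equivalent measure-theoretic input. For this expository/undergraduate-oriented note, rather than quoting Sard's general result it might be preferable to include a short self-contained proof of the one-dimensional $C^1$ statement — cover the critical set by finitely many small intervals and use the mean value theorem to control the total length of the image — so that the argument stays within the audience's toolkit.
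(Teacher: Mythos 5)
Your proposal is correct, and its two halves relate quite differently to the paper. The openness half is essentially the paper's own argument: you rerun the sufficiency proof of Theorem~\ref{Thm: peixoto} (disjoint intervals $I_k$ on which $|f'|\geq 2\epsilon_0$, compact complement $K(\delta)$ on which $|f|\geq 2\epsilon_1$) to conclude that any $C^1$-close $g$ again has finitely many hyperbolic zeros. One small remark: strict monotonicity of $g$ on $I_k$ only gives \emph{at most} one zero there; to get ``precisely $m$'' you also need that $g$ keeps the sign of $f$ on $K(\delta)$ and hence changes sign across each $I_k$. But for openness the weaker count ``at most $m$, all hyperbolic'' already puts $g$ in $\mathcal{F}$, so nothing breaks. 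The density half is where you genuinely depart from the paper. The paper reuses its bump-function machinery from the necessity proof --- first perturb to finitely many zeros, then hyperbolize each nonhyperbolic zero as in Cases 1 and 2 --- but this is only sketched there, and completing it rigorously for an arbitrary $C^1$ field, whose zero set can be Cantor-like, takes real work. Your argument (shift by a constant $t$ with $|t|<\epsilon$, use Sard's theorem to choose $-t$ a regular value of $f$, so that every zero of $f+t$ is hyperbolic, hence isolated, hence finite in number by compactness of $\mathbb{S}^1$) is shorter, fully rigorous, and disposes of all degenerate zero sets in one stroke; it is also a strictly stronger statement in that a single constant perturbation works. The price is the measure-theoretic input of Sard's theorem, which sits slightly outside the elementary toolkit this note targets; your suggestion to include the short self-contained one-dimensional proof (cover the critical set by small intervals and control the length of the image via the mean value theorem) addresses exactly that concern. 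Finally, note that your route is essentially the transversality-style proof the paper alludes to in its closing remark citing Guillemin and Pollack, specialized to the one-parameter family $f+t$.
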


\begin{proof}

The openness follows directly from the sufficiency proof of Theorem 1, and the
density is essentially a straightway consequence of the necessity argument for
the same theorem. In particular, it was shown in the sufficiency proof that
the fixed point hypothesis is preserved under sufficiently small
perturbations, and so $SS^{1}\left(  \mathbb{S}^{1}\right)  $ is a $C^{1}$
open subset of $C^{1}\left(  \mathbb{S}^{1}\right)  $.

It is clear from the methods used in proving the necessity part of Theorem 1,
that for any $C^{1}$ dynamical system $\dot{x}=f$ on $\mathbb{S}^{1}$ there is
an arbitrarily small $C^{1}$ perturbation $\dot{y}=g$ such that $g$ has only
finitely many zeros. Then, using the bump function methods employed for Cases
1 and 2 of the necessity in Theorem 1, we can obtain a further arbitrarily
small perturbation $\dot{z}=h$ with only hyperbolic fixed points. This
completes the proof.

\end{proof}

It is worth noting that one could have used several other types of bump
function based perturbations in the above proofs of the necessity of the
hyperbolic hypothesis in Theorem 1 and the density result in Theorem 2. For
example, the functions $\vartheta$ and $\hat{\vartheta}$ used for Case2 and
Case 3 (ii), respectively, in the necessity proof of Theorem 1 could be
replaced with an appropriate form of the derivative of a bump function, as is
evident from the plot of the first and second derivatives of the simple bump
function (\ref{Eq: simple bump}) given in Fig \ref{Fig: dbump}.

\begin{figure}[htb]
\centering
\includegraphics[scale=.4]{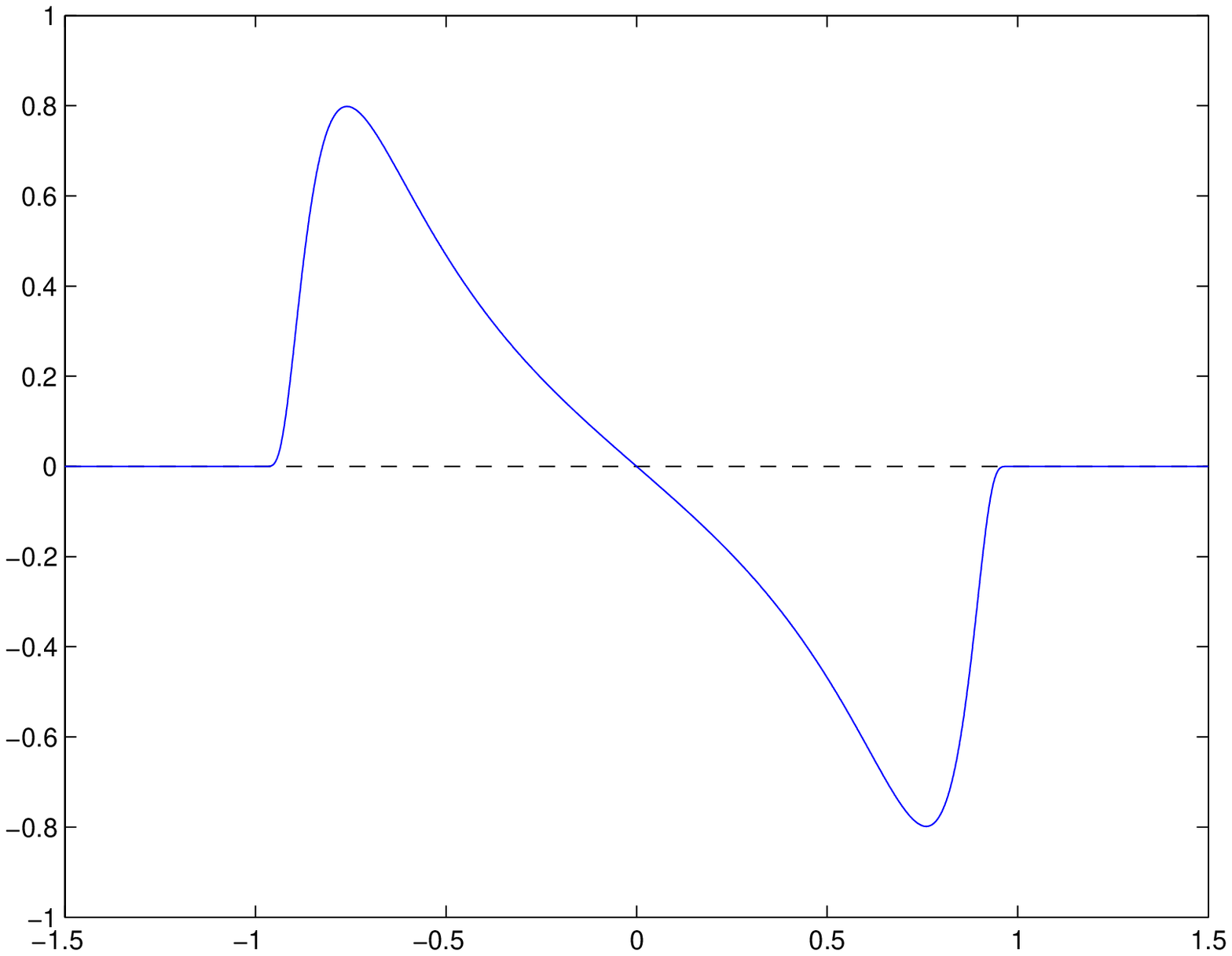}
\includegraphics[scale=.4]{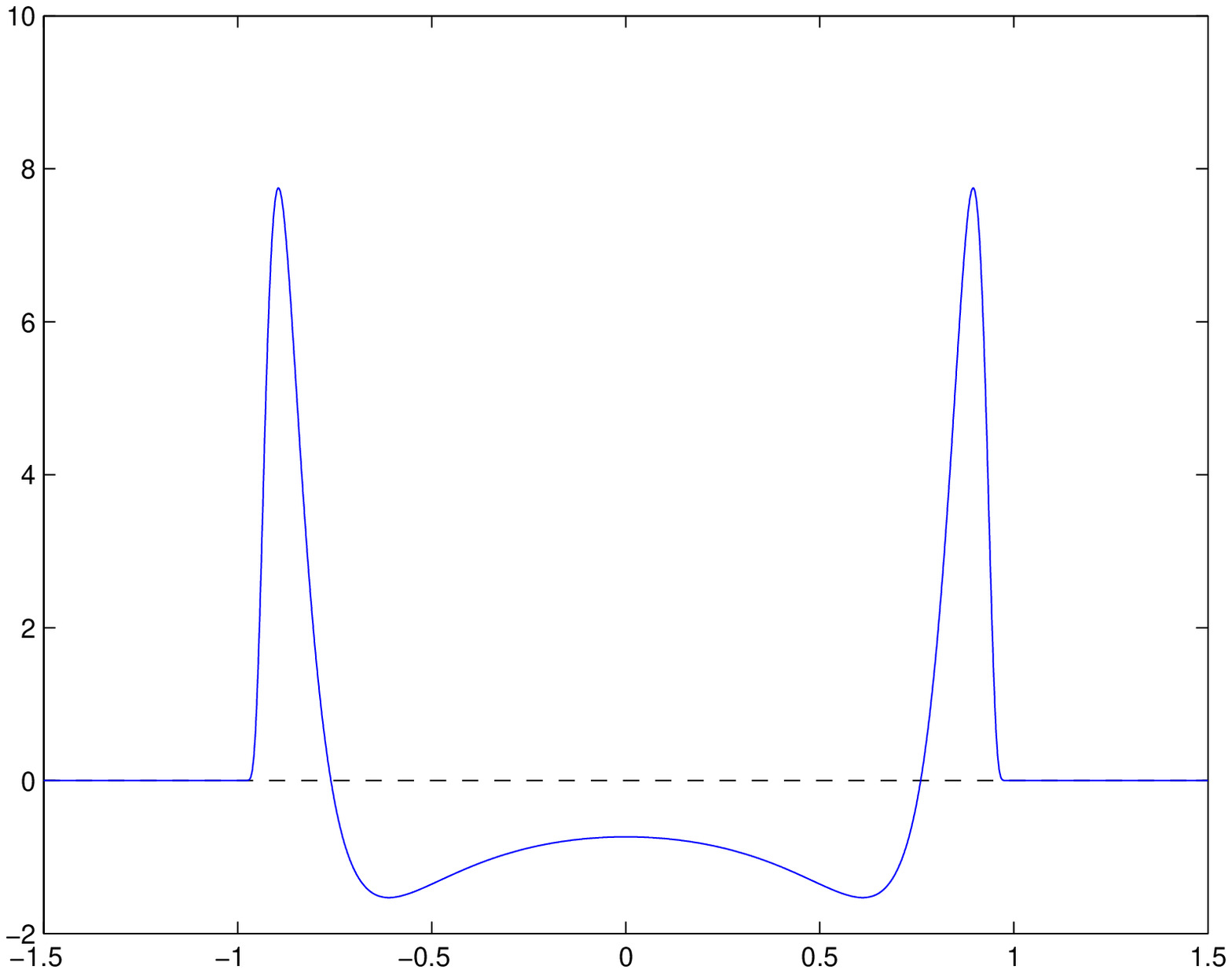}\caption{The first and second
derivatives, respectively, of the simple bump function.}
\label{Fig: dbump}
\end{figure}

Finally, it is interesting to remark that the proofs of both Theorems 1 and 2
can be reduced to just a few lines by the application of a standard
transversality theorem described for example in \cite{Guillemin-Pollack74}, which
is an indication of the importance of differential topology in the modern
theory of dynamical systems.

\section{Conclusion}

The epochal structural stability and density theorems of Peixoto for dynamical
systems on closed surfaces have long and complicated proofs involving concepts
unfamiliar to many undergraduate enthusiasts. In this note we have demonstrated
that the one-dimensional analogs of these theorems can be proved using methods
that are well known to most undergraduate mathematics majors, thus providing a
useful introduction to many of the elements of the two-dimensional proofs. One
might well imagine that Peixoto himself considered the one-dimensional version
and used it, along with the pioneering efforts of Andronov and Pontryagin, as a 
guide for his theorems.

%%%%%%%%%%%%%%%%%%%%

\section{Acknowledgement}

I would like to thank Professor Denis Blackmore of the Department of Mathematical
Sciences at New Jersey Institute of Technology for suggesting this project -
which he often sketched when teaching dynamical systems courses - and for his
constant encouragement and advice in bringing it to fruition. In addition, I would like 
to thank my friends Tom, Heather, and Ivana for giving me early feedback on the
readability of this note.

%%%%%%%%%%%%%%%%%%%%

\pagebreak

\nocite{Strogatz94}
\bibliographystyle{plain}
\bibliography{DynSys}

\end{document}